\newcommand{\myurl}[1]{\textsf{\footnotesize \url{#1}}\xspace}%
\newcommand{\mc}{{MaxCut}\xspace}
\newcommand{\qubo}{{QUBO}\xspace}
\newcommand{\with}{\ensuremath{\mid}\xspace}
\newcommand{\solver}[1]{\textsc{#1}\xspace}
\newcommand{\scip}{\solver{SCIP}}
\newcommand{\biqbin}{\solver{BiqBin}}
\newcommand{\biqmac}{\solver{BiqMac}}
\newcommand{\biqcrunch}{\solver{BiqCrunch}}
\newcommand{\madam}{\solver{MADAM}}
\newcommand{\mcsparse}{\solver{McSparse}}
\newcommand{\soplex}{\solver{SoPlex}}
\newcommand{\cplex}{\solver{CPLEX}}
\newcommand{\UG}{\solver{UG}}
\newcommand{\gurobi}{\solver{Gurobi}}
\newcommand{\qplib}{\solver{QPLIB}}
\newtheorem{proposition}{Proposition}
\newtheorem{formulation}{Formulation}
\title{Faster exact solution  of sparse MaxCut and QUBO problems}
\author{\ZTPHasOrcid{Daniel Rehfeldt}{0000-0002-2877-074X}\and\
  \ZTPHasOrcid{Thorsten Koch}{0000-0002-1967-0077}\and\
      \ZTPHasOrcid{Yuji Shinano}{0000-0002-2902-882X}}
\begin{document}
	
\maketitle

	\begin{abstract}
		The maximum-cut problem is one of the fundamental problems in combinatorial optimization. 
	With the advent of quantum computers, both the maximum-cut and the equivalent quadratic unconstrained binary optimization problem have experienced much interest in recent years.
	
	This article aims to advance the state of the art in the exact solution of both problems---by using mathematical programming techniques on digital computers. The main focus lies on sparse problem instances, although also dense ones can be solved.
	We enhance several algorithmic components such as reduction techniques and cutting-plane separation algorithms, and combine them in an exact branch-and-cut solver. Furthermore, we provide a parallel implementation.
	The new solver is shown to significantly outperform existing state-of-the-art software for sparse \mc and \qubo instances.
		Furthermore, we improve the best known bounds for several instances from the 7th DIMACS Challenge and the QPLIB, and solve some of them (for the first time) to optimality.

	\end{abstract}
		\section{Introduction}
		\label{sec:introduction}
		Given an undirected graph $G=(V,E)$, and edge weights $w: E \rightarrow
		\mathbb{Q}$, the \emph{maximum-cut} (\mc) problem is to find a partition $(V_1,V_2)$ of $V$ such that the summed weight of the edges between $V_1$ and $V_2$ is maximized.
		\mc is one of the fundamental $\mathcal{NP}$-hard optimization problems~\cite{Karp72} and has applications for example in VLSI design~\cite{Barahona88} and the theory of spin glasses in physics~\cite{Liers2004}\footnote{As a side note, the 2021 Nobel prize in Physics was awarded for work on spin glasses.}. The latter application is particularly interesting, because it 
		requires an exact solution of the \mc problem.
		
		A  problem that is equivalent to \mc is the \emph{quadratic unconstrained binary optimization} (\qubo) problem. Given a matrix $Q \in \mathbb{Q}^{n \times n}$, the corresponding \qubo problem can be formulated as
		  \begin{align*}
		    \min \;  x^T Q x & ~~~~~ \\
		     x \in \{0,1\}^n. &
		  \end{align*}
		  Any \qubo instance can be formulated as a \mc instance in a graph with $n+1$ vertices, and any \mc instance on a graph $(V,E)$ can be formulated as a \qubo instance with $n = |V|-1$, see e.g.~\cite{Barahona89}. The focus of this article is mostly on \mc algorithms, but due to the just mentioned equivalence, all results can be (and indeed are) applied to \qubo as well.
		  
The huge recent interest in quantum computing has also put \mc and \qubo in the spotlight: Both of them can be heuristically solved by current quantum annealers. However, J\"unger et al.~\cite{Juenger2021} demonstrate on a wide range of test-sets that digital computing methods prevail against state-of-the-art quantum annealers.

For digital computers, many heuristics have been proposed both for \mc and \qubo. See Dunning et. al.~\cite{Dunning18} for a recent overview. There have also been various articles on exact solution. See Barahona et al.~\cite{Barahona89} for an early, Rendl et al.~\cite{Rendl10} for a more recent, and J\"unger et al.~\cite{Juenger2021} for an up-to-date overview.
In the last years, more focus has been put on the development of methods that are best suited for \emph{dense} instances, see for example~\cite{Hrga19,Hrga21,Krislock17} for state-of-the-art methods. However, the maximum number of nodes for \mc (or number of variables for \qubo)  instances that can be handled by these methods is roughly 300.
In contrast, this article aims to advance the state of the art in the practical exact solution of \emph{sparse} \mc and \qubo instances. The largest (sparse) instance solved in this article has more than 10\,000 nodes.	
	
		\subsection{Contribution and structure}
		
		This article describes the design and implementation of a branch-and-cut based \mc and \qubo solver.
		In particular, we suggest several algorithmic improvements of key components of a branch-and-cut framework.
		
		Section~\ref{sec:relaxation} shows how to efficiently solve a well-known linear programming (LP) relaxation for the \mc problem by using cutting planes. Among other things, we demonstrate how the separation of maximally violated constraints, which was described by many authors as being too slow for practical use, can be realized with quite moderate run times.

	Section~\ref{sec:reduction}	is concerned with another vital component within branch-and-cut: reduction techniques.
	 We review methods from the literature and propose new ones. The reduction methods can be applied for preprocessing and domain propagation.
		
	Section~\ref{sec:branch-and-cut} shows how to integrate the techniques from the previous to sections as well as several additional methods in a branch-and-cut algorithm. Parallelization is also discussed.
	
	Section~\ref{sec:computational} provides computational results of the newly implemented \mc and \qubo solver on a large collection of test-sets from the literature.
	It is shown that the new solver outperforms the previous state of the art. Furthermore, the best known solutions of several benchmark instances can be improved and one is even solved (for the first time) to optimality.
		
		\subsection{Preliminaries and notation}
 
In the remainder of this article, we assume that a \mc instance $I_{MC} = (G,w)$ with graph $G=(V,E)$ and edge weights $w$ is given.	 
Graphs are always assumed to be undirected and simple, i.e., without parallel edges or self-loops. Given a graph $G = (V,E)$, we refer to the vertices and edges of any subgraph $G' \subseteq G$ as $V(G')$ and $E(G')$ respectively,
 An edge between vertices $u, v \in V$ is denoted by $\{u,v\}$. An edge set $C = \left \{ \{v_1,v_2\},\{v_2,v_3\},...,\{v_{k-1} ,v_k\}  \right \}$ is called a \emph{cycle}. A cycle $C$ is called \emph{simple} if all its vertices have degree $2$ in $C$.
 An edge $\{u,w\} \in E \setminus C$ is called a \emph{chord} of $C$ if both $u$ and $w$ are contained in (an edge of) $C$. If no such $\{u,w\}$ exists, we say that $C$ is \emph{chordless}.
 Given a graph  $G = (V,E)$ and a $U \subseteq V$, we define the induced \emph{edge cut} as $\delta(U):=\{ \{u,v\} \in E \with u\in U, v\in V\setminus U\}$. 

Finally, for any function $x: M \mapsto \mathbb{R}$ with $M$ finite, and any $M' \subseteq M$, we define $x(M') := \sum_{i \in M'} x(i)$.

\section{Solving the relaxation: efficient separation of odd-cycle cuts}
\label{sec:relaxation}

This section is concerned with an integer programming (IP) formulation for \mc due to Barahona and Mahjoub~\cite{Barahona86}, given below.

\begin{formulation}{Odd-cycle cuts}
  \label{form:dcut}
  \begin{eqnarray}
   \textrm{max}\quad {w}^T x\\
    \label{form:dcut:1}
    \text{s.t. } \sum_{e \in F} x(e) - \sum_{e \in C \setminus F} x(e)  &\leq& |F| - 1 \hspace{1mm}\text{for all cycles } C, F \subseteq C, |F| \text{ odd} \\
    \label{form:dcut:2}
    x(e)&\in& \{0,1\} \hspace{3.1mm}\text{for all } e \in E.
  \end{eqnarray}
\end{formulation}
  The formulation is based on the observation that for any edge cut $\delta(U)$ and any cycle $C$ the number of their common edges, namely $|C \cap \delta(U)|$, is even. 
  This property is enforced by the constraints~\eqref{form:dcut:1}. These constraints are called \emph{cycle inequalities}.

\subsection{Cutting plane separation}

Barahona and Mahjoub~\cite{Barahona86} show that the LP-relaxation of Formulation~\ref{form:dcut} can be solved in polynomial time.
More precisely, they describe how to separate the constraints~\eqref{form:dcut:2} in polynomial time, as demonstrated in the following.
First, rewrite the constraints~\eqref{form:dcut:2} as
\begin{equation}
\label{form:dcut:1b}
\sum_{e \in F} \left(1-x(e) \right) + \sum_{e \in C \setminus F} x(e)  \ge 1 \hspace{3mm}\text{for all cycles } C, F \subseteq C, |F| \text{ odd}.
\end{equation}
Next, construct a new graph $H$ from the \mc graph $G = (V,E)$. This graph $H$ consists of two copies $G' = (V',E')$ and $G'' = (V'',E'')$ of $G$, connected by the following additional edges. For each $v \in V$ let $v'$ and $v''$ be the corresponding vertices in $G'$ and $G''$, respectively. For each edge $\{v,w\} \in E$ let $\{v',w''\}$ 
and  $\{v'',w'\}$ be in $H$. Finally, for any (LP-relaxation vector) $x \in [0,1]^E$ define the following edge weights $p$ on $H$:
For each $e = \{v,w\} \in E$, set $p(\{v',w'\}) := p(\{v'',w''\}) := x(e)$ and  $p(\{v',w''\}) := p(\{v'',w'\}) := 1 - x(e)$. The construction is exemplified in Figure~\ref{fig:cutgraph}. Consider, for example, the edge $\{v,w\}$ in Figure~\ref{fig:graph_org}. The weight $p$ of the corresponding (dashed) edges $\{v',w''\}$ and $\{v'',w'\}$ in Figure~\ref{fig:graph_sepa}  is $1 - x(\{v,w\})$. The weight $p$ of the corresponding (bold) edges $\{v',w'\}$ and $\{v'',w''\}$ is $x(\{v,w\})$.

	\begin{figure}[ht]
  		\centering
  		\subfloat[Original graph.]{
  		\begin{tikzpicture}[scale=1.2]
  		  \tikzstyle{steiner} = [circle, draw, thick, minimum size=0.2, inner sep=3.0pt];
  		  \tikzstyle{invisible} = [circle, draw, thick, minimum size=0.0, inner sep=0.0pt];
  		  \node[steiner, label=left:{\small  ~}] (u) at (0,0.0) {\small $u$};
  		  \node[steiner, label=left:{\small  ~}] (v) at (0,3.0) {\small $v$};
  		  \node[steiner, label=left:{\small  ~}] (w) at (0.75,1.5) {\small $w$};
  		
  		  \node[invisible, label=left:{\small  ~}] (x) at (1.5,1.5) {};

  		  \draw[] (u) -- (v) node [midway, left=-0.0pt] {\small };
  		  \draw[] (v) -- (w) node [midway, above right=-0.0pt] {\small };
  		  \draw[] (w) -- (u) node [midway, above right=-0.0pt] {\small };
  		  
  		    \node[invisible, label=left:{\small  ~}] (x) at (-1.2,-1.2) {};
  		
  		\end{tikzpicture}
  			\label{fig:graph_org}
  		}
  		\hfill
  		\subfloat[Auxiliary graph, consisting of two copies of the original graph (bold edges), and additional connecting edges (dashed). ]{
  		\begin{tikzpicture}[scale=1.2]
  		  \tikzstyle{steiner} = [circle, draw, thick, minimum size=0.2, inner sep=1.5pt];
  		  \tikzstyle{invisible} = [circle, draw, thick, minimum size=0.0, inner sep=0.0pt];
  		  \node[steiner, label=left:{\small  ~}] (u1) at (0,0.0) {\small $u'$};
  		  \node[steiner, label=left:{\small  ~}] (v1) at (0,3.0) {\small $v'$};
  		  \node[steiner, label=left:{\small  ~}] (w1) at (0.75,1.5) {\small $w'$};
  		
  		  \node[steiner, label=left:{\small  ~}] (u2) at (3,0.0) {\small $u''$};
  		  \node[steiner, label=left:{\small  ~}] (v2) at (3,3.0) {\small $v''$};
  		  \node[steiner, label=left:{\small  ~}] (w2) at (2.25,1.5) {\small $w''$};
  		
  		  \draw[] (u1) -- (v1) node [midway, left=-0.0pt] {\small };
  		  \draw[] (v1) -- (w1) node [midway, above right=-0.0pt] {\small };
  		  \draw[] (w1) -- (u1) node [midway, above right=-0.0pt] {\small };
  		  
  		  \draw[] (u2) -- (v2) node [midway, left=-0.0pt] {\small };
  		  \draw[] (v2) -- (w2) node [midway, above right=-0.0pt] {\small };
  		  \draw[] (w2) -- (u2) node [midway, above right=-0.0pt] {\small };

  		  \draw [dashed]   (u1) to[out=-20,in=-70, distance=3.5cm ] (v2);
  		  \draw [dashed]   (v1) to[out=20,in=70, distance=3.5cm ] (u2);

  		  \draw[dashed] (v1) -- (w2) node [midway, left=-0.0pt] {\small };
  		  \draw[dashed] (w1) -- (v2) node [midway, left=-0.0pt] {\small };
  		  
  		  \draw[dashed] (u1) -- (w2) node [midway, left=-0.0pt] {\small };
  		  \draw[dashed] (w1) -- (u2) node [midway, left=-0.0pt] {\small };
  		
  		\end{tikzpicture}
  			\label{fig:graph_sepa}
  		}
	\caption{\mc graph and corresponding auxiliary graph for cycle cut separation. } 
  		\label{fig:cutgraph}
  	\end{figure}
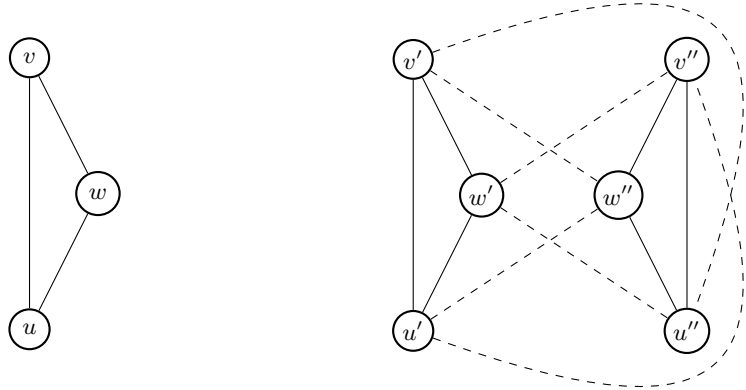

Given an LP-relaxation vector $x \in [0,1]^E$, we can find violated inequalities~\eqref{form:dcut:2} as follows.
For each $v \in V$ compute a shortest path between $v'$ and $v''$ in the weighted graph $(H,p)$. 
By construction of $H$, such a path contains an odd number of edges which are neither in $E'$ nor $E''$. Let $F$ be the corresponding set of edges in $E$; i.e. for each edge $\{v',w''\}$ or $\{v'',w'\}$ that is in the shortest path, let $\{v,w\}$ be in $F$.
Furthermore, the edges of the shortest path correspond to a closed walk $C$ in $G$.
The length of the shortest path in $(H,p)$ is equal to $\sum_{e \in F} \left(1-x(e) \right) + \sum_{e \in C \setminus F} x(e)$.
Thus, if for each $v \in V$ the corresponding shortest path between $v'$ and $v''$ in $(H,p)$ has length at least $1$, the vector $x$ is an optimal solution to the LP-relaxation of Formulation~\ref{form:dcut}. Otherwise, we have found at least one violated constraint.

Although shortest paths can be computed in polynomial time, the literature has so far considered the above separation procedure as too time-consuming to be directly used in practical exact \mc or \qubo solution.
Instead, heuristics are employed and exact cycle separation is only used if no more cuts can be found otherwise, see, e.g.,~\cite{Barahona88,Barahona89,Bonato2014,Juenger2021,Liers2004}.
However, as we will show in the following, the exact separation can actually be realized in a practically quite efficient way.

\subsection{Fast computation of maximally-violated constraints}
\label{sec:relaxation:fast}

Initially, we observe that it is usually possible to considerably reduce the size of the auxiliary graph $H$ described above.
First, all edges $e$ of $H$ with $p(e) = 1$ (or practically, with $p(e)$ being sufficiently close to $1$) can be removed. Because all edge weights are non-negative, no such edges can be contained in a path of weight smaller than $1$.
 Second, one can contract edges $e$ with $p(e) = 0$.
Both of these operations can be done implicitly while creating the auxiliary graph (e.g., edges with weight $1$ are never added). In this way, one can use cache-efficient, static data structures, such as the compressed-sparse-row format, see e.g.~\cite{kepner2011}, for representing the auxiliary graph.

For computing a shortest path, we use a modified version of Dijkstra's algorithm. For any vertex $v$ in the auxiliary graph let $d(v)$ denote the distance of $v$ to the start vertex, as computed by the algorithm. We use the following modifications.
 First, we stop the execution of the algorithm as soon as we scan a vertex $v$ with $d(v) \geq 1$. Second, as already observed in J\"unger and Mallach~\cite{Junger19}, one does need to not proceed the shortest path computation from any vertex, say $v'$, in the auxiliary graph where the twin vertex,  $v''$, has already been scanned and the following condition holds: $d(v') + d(v'') \geq 1$.

Finally, we use an optimized implementation of Dijkstra's algorithm together with a specialized binary heap. For the latter, we exploit the fact that the values (i.e. vertex indices) of the key, value pairs inserted into the heap are natural numbers bounded by the number of vertices of the auxiliary graph.

\subsection{Post-processing}
\label{sec:relaxation:post}

As already mentioned above, the edges of the shortest path computed in the auxiliary graph correspond to a closed walk $G$---but not necessarily to a simple cycle.
Thus, J\"unger and Mallach~\cite{Junger19} suggest to extract all simple cycles from such a closed walk and separate the corresponding inequalities. We follow this suggestion (although we note that this modification is performance neutral in our implementation).

Barahona and Mahjoub~\cite{Barahona86} observe that a cycle inequality is only facet-defining if the corresponding cycle is chordless.
If a cycle $C$ has a chord $e$, one readily obtains two smaller cycles $C_1$ and $C_2$ with $C_1 \cup C_2 = C \cup \{e\}$ and $C_1 \cap C_2 = \{e\}$.
One verifies that any cycle inequality defined on $C$
can be written as the sum of two cycle inequalities defined on $C_1$ and $C_2$, where $e$ is in the odd edges set $F$ of exactly one of the two cycle inequalities.
J\"unger and Mallach~\cite{Junger21informs} suggest a procedure to extract from any simple cycle $C$ with corresponding violated cycle-inequality a chordless cycle $C'$ whose cycle-inequality is also violated. This procedure runs in $O(|E|)$.
However, a disadvantage of this approach is that it finds only one such chordless cycle, which might not be the smallest or most violated one. Additionally, there can be several such chordless cycles. In the following, we suggest a procedure to find several non-overlapping 
chordless cycles with corresponding violated cycle inequality from a given cycle $C$ with violated cycle inequality.

Consider a simple cycle $C = \left \{ \{v_1,v_2\},\{v_2,v_3\},...,\{v_{k-1} ,v_k\}  \right \}$ and let $F \subseteq C$ with $|F|$ odd. Assume there is a vector $x \in [0,1]^E$ such that the cycle inequality corresponding to $C$ and $F$ is violated, that is:
\begin{equation*}
\sum_{e \in F} \left(1-x(e) \right) + \sum_{e \in C \setminus F} x(e)  < 1.
\end{equation*}
For each $i = 2,...,k$ define $P_i := \left\{ \{v_1,v_2\}, \{v_2,v_3\},...,\{v_{i-1},v_i\} \right\}$ and store the following information.
\begin{itemize}
\item $f(i) := |F \cap P_i|$,
\item $q(i) := \sum_{e \in F \cap P_i} \left(1-x(e) \right) + \sum_{e \in (C \cap P_i) \setminus F} x(e)$.
\end{itemize}        
This information can be computed in total time $O(|C|)$: Traverse the nodes $v_i$, $i = 2,3,..,k$  of $C$ in this order and compute the above two values for $i$ from $i-1$.

With the above information at hand, traverse for each $i = 2,...,k$ the incident edges of $v_i$. Whenever a chord $\{v_i,v_j\}$ with $j<i$ is found, check whether the cycle inequality of one or both of the corresponding cycles is violated. This check can be performed in constant time by
using the precomputed information for the indices $i$ and $j$. For example, if $f(i) - f(j)$ is even, one of the corresponding two cycle inequalities is
\begin{equation*}
q(i) - q(j) + 1 - x(\{v_i,v_j\}) \geq 1.
\end{equation*} 

If a violated cycle inequality is found, add the corresponding chord together with a flag that indicates which of the two possible cycles is to be used to some (initially empty) queue $R$.
Once the incident edges of all nodes $v_i$ for  $i = 2,...,k$ have been traversed,  
sort the elements of $R$ according to the size of the corresponding cycles in non-decreasing order. Consider all indices of the original cycle as unmarked.
Check the (implicit) cycles in $R$ in non-decreasing order. Let $\{v_i,v_j\}$ with $i<j$ be the corresponding chord. If both $i$ and $j$ are unmarked, mark the indices $i+1,i+2,...,j-1$. Otherwise, discard the (implicit) cycle. Finally, add all cycle inequalities corresponding to non-discarded cycles to the cut pool.
The overall procedure runs in $O(|E| \log(|E|))$. In practice, its run time is completely neglectable.

Finally, we suggest a procedure to obtain additional cycle cuts from the auxiliary graph.
 This approach is particularly useful for \mc instances with few vertices, because the number of generated cycle inequalities separated in each round is limited by the number of vertices of the \mc instance (if we ignore additional cycle-inequalities that are possibly found by the above post-processing).
 The procedure makes use of the symmetry of the auxiliary graph. Assume that we have computed a shortest path between a pair of vertices, say $v'$ and $v''$, as described above. Recall that $d(w)$ denotes the distance of any vertex $w$ to the start vertex $v'$.
 If there is a twin pair of vertices $u',u''$ such that none of them are part of the shortest path between $v'$ and $v''$, and $d(u') + d(u'') < 1$, we can get another violated cycle inequality as follow:
First, we take the $v'$-$u'$ path computed by the algorithm. Second, we consider the $v'$-$u''$ path, and transform it to an $u''$-$v''$ path (of same length) by exploiting the symmetry of the auxiliary graph. By combining the two paths, we obtain an $v'$-$v''$ path of length $d(u') + d(u'')$.

\section{Simplifying the problem: reduction techniques}
\label{sec:reduction}
Reduction techniques are a key ingredient for the exact solution of many $\mathcal{NP}$-hard optimization problems, such as Steiner tree~\cite{RehfeldtKoch21_ipco} or vertex coloring~\cite{LinCLS17}.
For \qubo, several reductions methods have been suggested in the literature. Basic techniques can already be found in Hammer et al.~\cite{hammer68}. The perhaps most extensive reduction framework is given in Tavares et. al.~\cite{boros2006preprocessing}. Recently, Glover et al.~\cite{Glover18} provided efficient realizations and extensions of the classic \emph{first} and \emph{second order} \emph{derivative} and \emph{co-derivative} techniques~\cite{hammer84}. We have implemented the methods from Glover et al.~\cite{Glover18} for this article. 
However, we do not provide details, but rather concentrate on \mc reduction techniques in the following.  

For \mc, there are several articles that discuss reduction techniques for unweighted \mc. Ferizovic et al.~\cite{Ferizovic20} provide the practically most powerful collection of such techniques.
Lange et al.~\cite{Lange19} provide techniques for general (weighted) \mc instances. In the following, we will describe some of their methods. Furthermore, we suggest new \mc reduction methods. Their practical strength will be demonstrated in Section~\ref{sec:computational}.

Initially, we note that any edge with weight $0$ can be removed from $I_{MC}$. Any solution to this reduced version of $I_{MC}$ can be extended to a solution of same weight to the original instance (in linear time).
Thus, in the following we assume no edges have weight $0$.
We also note that for the incidence vector $x \in \{0,1\}^E$ of any graph cut one obtains a  corresponding (but not unique) vertex assignment $y \in \{0,1\}^V$ that satisfies for all $\{u,v\} \in E$ the relation $y(u) \neq y(v) \iff x(\{u,v\}) = 1$. This correspondence will be used repeatedly in the following.

\subsection{Cut-based reduction techniques}
\label{sec:reduction:cut}
The first reduction technique from Lange et al.~\cite{Lange19} is based on the following proposition.

\begin{proposition}[\cite{Lange19}]
\label{prop:rededge}
Let $e \in E$ and $U \subset V$ such that $e \in \delta(U)$.
If
\begin{equation*}
|w(e)| \geq \sum_{a \in \delta(U) \setminus \{e\}} |w(a)|,
\end{equation*}
then there is an optimal solution $x \in \{0,1\}^E$ to $I_{MC}$ with $x(e) = \beta$, where $\beta = 1$ if $w(e) > 0$, and $\beta = 0$ if $w(e) < 0$.
\end{proposition}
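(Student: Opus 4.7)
The plan is to take any optimal cut and, if it does not already satisfy $x(e) = \beta$, transform it via a single ``side-flip'' on $U$ into another feasible cut that does, and then show that under the hypothesis this modification cannot decrease the objective. So the modified cut is still optimal and realizes the desired value on $e$.

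Concretely, I would start by fixing the correspondence between edge incidence vectors $x \in \{0,1\}^E$ and vertex partitions (as set out in the preliminaries of Section~\ref{sec:reduction}), and pick any optimal $x^* \in \{0,1\}^E$ with associated partition $(S, V \setminus S)$. The key geometric observation is that replacing $S$ by the symmetric difference $S \triangle U$ changes the cut-status of an edge $a$ if and only if $a \in \delta(U)$: edges with both endpoints in $U$ or both outside $U$ have both endpoints' side-memberships flipped (or neither flipped), whereas an edge crossing $U$ has exactly one endpoint flipped. Denote this flipped vector by $x'$; then $x'(a) = 1 - x^*(a)$ for $a \in \delta(U)$ and $x'(a) = x^*(a)$ otherwise.

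Now partition $\delta(U) \setminus \{e\}$ into $A = \{a \in \delta(U) \setminus \{e\} : x^*(a) = 1\}$ and $B = \{a \in \delta(U) \setminus \{e\} : x^*(a) = 0\}$. The change in objective from $x^*$ to $x'$ is
\begin{equation*}
w^T x' - w^T x^* = (1 - 2x^*(e))\, w(e) \;+\; \sum_{a \in B} w(a) \;-\; \sum_{a \in A} w(a).
\end{equation*}
The sum over $A$ and $B$ is lower-bounded by $-\sum_{a \in \delta(U) \setminus \{e\}} |w(a)|$, which by hypothesis is at least $-|w(e)|$. Therefore
\begin{equation*}
w^T x' - w^T x^* \;\geq\; (1 - 2x^*(e))\, w(e) - |w(e)|.
\end{equation*}
A short case distinction finishes the argument: if $x^*(e) \neq \beta$, then $(1 - 2x^*(e))$ has the same sign as $w(e)$ (by the definition of $\beta$), so $(1 - 2x^*(e))\,w(e) = |w(e)|$ and the right-hand side is $\geq 0$. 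Hence $x'$ is again optimal, and by construction $x'(e) = 1 - x^*(e) = \beta$. If instead $x^*(e) = \beta$ already, there is nothing to do.

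I do not expect a serious obstacle here; the only care needed is to get the bookkeeping of the flip right---i.e., verifying that flipping exactly the vertices in $U$ toggles precisely the edges in $\delta(U)$---and to handle the two sign cases for $w(e)$ uniformly, which the factor $(1 - 2x^*(e))$ accomplishes.
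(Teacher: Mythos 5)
Your proof is correct. The paper itself gives no proof of this proposition---it is quoted from Lange et al.\ without argument---but your symmetric-difference ``side-flip'' on $U$ is exactly the technique the paper uses to prove the analogous new Proposition~\ref{prop:redtriag2} (flip the vertex assignment on $V_1$, note that precisely the edges of $\delta(V_1)$ toggle, and bound the objective change by the hypothesis), so your argument matches the paper's approach; the only implicit ingredient is the paper's standing assumption that no edge has weight $0$, which makes $\beta$ well defined.
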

Note that in the case of $x(e) =0$, one can simply contract $e$. In the case of $x(e) =1$, one needs to multiply the weights of the incident edges of one of the endpoints of $e$ by $-1$ before the contraction.

One way to check for all $e \in E$ whether an $U \subset V$ exists such that the conditions of Proposition~\ref{prop:rededge} are satisfied is by using Gomory-Hu trees.
We have only implemented a simpler check that considers for an edge $e = \{v,u\} \in E$ the sets $\{v\}$ and $\{u\}$ as $U$, as already suggested in Lange et al.~\cite{Lange19}.
A combined check for all edges can be made in $O(|E|)$. We note that this test corresponds to the first order derivative reduction method (mentioned above) for \qubo. This relation can be readily verified by means of the standard transformations between \mc and \qubo.

The next reduction technique from Lange et al.~\cite{Lange19} is based on triangles, and is given below.
\begin{proposition}[\cite{Lange19}]
\label{prop:redtriag}
Assume there is a triangle in $G$ with edges $\{v_1,v_2\}$, $\{v_1,v_3\}$, $\{v_2,v_3\}$. Let $V_1 \subset V$ such that $\{v_1,v_2\}, \{v_1,v_3\} \subset \delta(V_1)$, and $V_2 \subset V$ such that $\{v_1,v_2\}, \{v_2,v_3\} \subset \delta(V_2)$.
If
\begin{equation*}
w(\{v_1,v_3\}) + w(\{v_1,v_2\})   \geq \sum_{e \in \delta(V_1) \setminus \left\{ \{v_1,v_3\}, \{v_1,v_2\}\right\}} |w(e)|
\end{equation*}
and
\begin{equation*}
w(\{v_1,v_2\}) + w(\{v_2,v_3\})   \geq \sum_{e \in \delta(V_2) \setminus \left\{ \{v_1,v_2\}, \{v_2,v_3\}\right\}} |w(e)|,
\end{equation*}
then there is an optimal solution $x \in \{0,1\}^E$ to $I_{MC}$ with $x(\{v_1,v_2\}) = 0$.
\end{proposition}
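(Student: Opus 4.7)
My plan is to take any optimal partition $U^* \subseteq V$ and, whenever $v_1, v_2$ lie on opposite sides of $U^*$, construct another partition $U'$ of at least the same cut weight that puts them on the same side. Since the hypothesis of Proposition~\ref{prop:redtriag} is, up to relabelling $v_1 \leftrightarrow v_2$, symmetric in its two conditions, I expect to treat the two ``bad'' local triangle-configurations separately using the two inequalities.

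First, I would fix an optimal $U^*$ and assume WLOG $v_1 \in U^*$. If $v_2 \in U^*$ as well, there is nothing to do. Otherwise $\{v_1,v_2\} \in \delta(U^*)$, and I split into case (B), $v_3 \notin U^*$, in which the restriction of $U^*$ to the triangle mirrors the structure imposed on $V_1$ by the hypothesis, and case (A), $v_3 \in U^*$, the analogous situation for $V_2$.

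In case (B) I would set $U' := U^* \triangle V_1$ (and in case (A), $U' := U^* \triangle V_2$). Using the standard identity $\delta(U \triangle V) = \delta(U) \triangle \delta(V)$, the change in cut weight becomes
\begin{equation*}
w(\delta(U')) - w(\delta(U^*)) \;=\; w(\delta(V_i)) - 2\,w(\delta(U^*) \cap \delta(V_i)),
\end{equation*}
where $i \in \{1,2\}$ depending on the case. By the case assumption, both distinguished triangle edges ($\{v_1,v_2\}, \{v_1,v_3\}$ for $i=1$, respectively $\{v_1,v_2\}, \{v_2,v_3\}$ for $i=2$) lie in $\delta(U^*) \cap \delta(V_i)$, so they cancel in the symmetric difference; this immediately gives $\{v_1,v_2\} \notin \delta(U')$, i.e.\ $x'(\{v_1,v_2\}) = 0$ in the modified cut. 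To show non-negativity of the change, I would decompose $\delta(V_i)$ into the two distinguished edges and a remainder $B_i$, reducing the change to an expression of the form ``$-($sum of the two triangle weights$)$ plus a term bounded in absolute value by $\sum_{e \in B_i}|w(e)|$'', and then plug in the corresponding hypothesis inequality.

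The main obstacle I expect is the sign-bookkeeping on $B_i$: its edges may or may not belong to $\delta(U^*)$ and may carry weights of either sign, so one must split $B_i$ into the four subsets defined by these two binary choices and verify that the hypothesis's $|\cdot|$-bound really dominates the worst-case arrangement. Once this routine but delicate accounting is done, the two cases combine, via the symmetry of conditions (i) and (ii), to produce an optimal $U'$ with $x'(\{v_1,v_2\}) = 0$, as required.
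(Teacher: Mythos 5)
The paper gives no proof of this proposition---it is quoted from \cite{Lange19}---so your argument can only be judged on its own terms; for what it is worth, your machinery (flipping along a cut $\delta(V_i)$, the identity $\delta(U\triangle W)=\delta(U)\triangle\delta(W)$, the resulting weight formula, and the observation that $\{v_1,v_2\}\notin\delta(U')$ after the flip) is exactly the technique the paper uses to prove the analogous Proposition~\ref{prop:redtriag2}, and all of that bookkeeping is correct. The argument breaks at the final step, ``plug in the corresponding hypothesis inequality''. In your case (B) the change in cut weight is
\begin{equation*}
w(\delta(U'))-w(\delta(U^*)) \;=\; -\bigl(w(\{v_1,v_2\})+w(\{v_1,v_3\})\bigr)+\Delta, \qquad |\Delta|\le \sum_{e\in B_1}|w(e)|,
\end{equation*}
and the hypothesis bounds $\sum_{e\in B_1}|w(e)|$ \emph{from above} by $w(\{v_1,v_2\})+w(\{v_1,v_3\})$. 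Substituting therefore yields $w(\delta(U'))-w(\delta(U^*))\le 0$, an \emph{upper} bound of zero on the gain, whereas you need a \emph{lower} bound of zero to conclude that $U'$ is again optimal. The inequality points the wrong way, and the loss is in general strict.

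This is not a slip you can patch, because the proposition as printed is false. Take $G$ to be a single triangle with $w(\{v_1,v_2\})=10$, $w(\{v_1,v_3\})=w(\{v_2,v_3\})=1$, and $V_1=\{v_1\}$, $V_2=\{v_2\}$: both hypotheses read $11\ge 0$, yet the optimal cut value $11$ is attained only by isolating $v_1$ or $v_2$, i.e.\ only with $x(\{v_1,v_2\})=1$. The hypotheses as written certify the opposite kind of move---that flipping $V_1$ (resp.\ $V_2$) to bring both distinguished edges \emph{into} the cut cannot decrease the objective---which is consistent with the conclusion $x(\{v_1,v_2\})=1$ of Proposition~\ref{prop:redtriag2} and points to a sign or conclusion error in the transcription from \cite{Lange19}. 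In short: your construction is the right one for this family of results, but no proof of the statement as given can succeed, and the place where yours fails is precisely where the falsity of the statement shows up.
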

Similarly to the previous proposition, we only implemented tests for the simple cases of  $\{v_1\}$, $\{v_2\}$, $\{v_1,v_3\}$, and $\{v_2,v_3\}$ for $V_1$ and $V_2$, respectively.

In the following, we propose a new reduction test based on triangles, which complements the above one from Lange et al.~\cite{Lange19}.

\begin{proposition}
\label{prop:redtriag2}
Assume there is a triangle in $G$ with edges $\{v_1,v_2\}$, $\{v_1,v_3\}$, $\{v_2,v_3\} \in E$ such that 
$w(\{v_1,v_2\}) > 0$, $w(\{v_1,v_3\}) > 0$, and $w(\{v_2,v_3\}) < 0$. Let $V_1 \subset V$ such that $\{v_1,v_2\}, \{v_1,v_3\} \in \delta(V_1)$ and let $V_2 \subset V$ such that $\{v_1,v_2\}, \{v_2,v_3\} \in \delta(V_2)$.
If 
\begin{equation}
	 \label{prop:trinew:1}
w(\{v_1,v_2\}) +  w(\{v_1,v_3\}) \geq \sum_{e \in \delta(V_1) \setminus \left \{  \{v_1,v_2\},  \{v_1,v_3\} \right \} } |w(e)|,
\end{equation}
and
\begin{equation}
	\label{prop:trinew:2}
w(\{v_1,v_2\}) -  w(\{v_2,v_3\}) \geq \sum_{e \in  \delta(V_2) \setminus \left \{  \{v_1,v_2\},  \{v_1,v_3\} \right \} } |w(e)|,
\end{equation}
then there is an optimal solution $x \in \{0,1\}^E$ to $I_{MC}$ such that $x(\{v_1,v_2\}) = 1$.
\end{proposition}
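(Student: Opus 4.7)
The plan is to argue by an exchange/flipping argument: I would take an arbitrary optimal solution and, if it violates $x(\{v_1,v_2\}) = 1$, explicitly construct a no-worse solution that satisfies it. So I would start from an optimal cut given by a vertex assignment $y^* \in \{0,1\}^V$ corresponding to some $x^* \in \{0,1\}^E$. If $y^*(v_1) \neq y^*(v_2)$, there is nothing to do. Otherwise I split into two cases according to $y^*(v_3)$ and, in each case, flip the assignment on one of $V_1$ or $V_2$ (i.e.\ replace $y^*$ by $y'$ with $y'(v) = 1 - y^*(v)$ for $v$ in the chosen set and $y'(v) = y^*(v)$ otherwise). The key arithmetic fact I will use throughout is that flipping a set $U$ toggles exactly the cut-status of the edges in $\delta(U)$, so the change in objective equals the net weight gained on formerly non-cut edges of $\delta(U)$ minus the net weight lost on formerly cut edges of $\delta(U)$.

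For the two cases, assume without loss of generality that $v_1 \in V_1$, $v_2,v_3 \notin V_1$, and $v_2 \in V_2$, $v_1,v_3 \notin V_2$ (the alternatives are symmetric, since flipping the complement of $V_i$ is the same operation). In \textbf{Case~A}, $y^*(v_1) = y^*(v_2) = y^*(v_3)$, so all three triangle edges are non-cut. Flipping $V_1$ turns the two edges $\{v_1,v_2\}, \{v_1,v_3\} \in \delta(V_1)$ from non-cut to cut, contributing $w(\{v_1,v_2\}) + w(\{v_1,v_3\})$, while the remaining edges of $\delta(V_1)$ can, in the worst case, each lose at most $|w(e)|$. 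Hypothesis (\ref{prop:trinew:1}) then yields a non-negative change, and after the flip $y'(v_1) \neq y'(v_2)$ as required. In \textbf{Case~B}, $y^*(v_1) = y^*(v_2) \neq y^*(v_3)$, so $\{v_1,v_2\}$ is non-cut and $\{v_2,v_3\}$ is cut. Flipping $V_2$ now gains $w(\{v_1,v_2\})$ from $\{v_1,v_2\}$ becoming cut and gains $-w(\{v_2,v_3\}) = |w(\{v_2,v_3\})| > 0$ from $\{v_2,v_3\}$ leaving the cut, while the other edges of $\delta(V_2)$ lose at most their absolute values. Hypothesis (\ref{prop:trinew:2}) — read with the natural convention that the exclusion set is the pair of triangle edges actually lying in $\delta(V_2)$, namely $\{v_1,v_2\}$ and $\{v_2,v_3\}$ (the statement appears to contain a harmless typo here) — guarantees the change is non-negative, and again $y'(v_1) \neq y'(v_2)$ afterwards.

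The only genuinely delicate point, and thus the one I would write out carefully, is the worst-case bookkeeping for the "remaining" edges of $\delta(V_i)$: I must argue that each such edge contributes at most $|w(e)|$ to the negative part of the change, regardless of whether it is currently cut or not. This is immediate because flipping can only either add $w(e)$ or subtract $w(e)$ from the objective, and in either case the loss is bounded by $|w(e)|$; summing over $\delta(V_i) \setminus \{\text{the two triangle edges}\}$ gives exactly the right-hand sides of (\ref{prop:trinew:1}) and (\ref{prop:trinew:2}). Iterating: the new solution is optimal and has $x(\{v_1,v_2\}) = 1$, which is the claim. No further case is needed because the situation $y^*(v_1) \neq y^*(v_2)$ was handled at the outset, and the two subcases of $y^*(v_1) = y^*(v_2)$ exhaust the possibilities for $y^*(v_3)$.
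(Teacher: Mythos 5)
Your proposal is correct and follows essentially the same route as the paper's proof: an exchange argument that flips the assignment on $V_1$ when all three triangle edges are uncut and on $V_2$ when $\{v_1,v_3\}$ and $\{v_2,v_3\}$ are cut, bounding the loss on the remaining boundary edges by $\sum |w(e)|$ (the paper writes out only the first case and declares the second analogous). Your observation that the exclusion set in~\eqref{prop:trinew:2} should read $\left\{\{v_1,v_2\},\{v_2,v_3\}\right\}$ is also correct, and your reading matches the intended (weaker, hence more useful) hypothesis.
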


\begin{proof}

Let	 $x \in \{0,1\}^E$ be a feasible solution to  $I_{MC}$ with $x(\{v_1,v_2\}) = 0$. We will construct a feasible solution $x'  \in \{0,1\}^E$ with $x(\{v_1,v_2\}) = 1$ such that $w^T x' \geq w^T x$. Thus, there exists at least one optimal solution  $x \in \{0,1\}^E$ with  $x(\{v_1,v_2\}) = 1$.

Because $x(\{v_1,v_2\}) = 0$, it needs to holds that either
\begin{equation}
	 \label{prop:trinew:3}
x(\{v_1,v_3\}) =  x(\{v_2,v_3\}) = 0 
\end{equation}
or
\begin{equation}
x(\{v_1,v_3\}) =  x(\{v_2,v_3\}) = 1.
\end{equation}
We just consider the case~\eqref{prop:trinew:3}; the second one can be handled in an analogeous way.
Let $y \in \{0,1\}^V$ be a vertex assignment corresponding to $x$; i.e., for all $\{u,v\} \in E$ it holds that $y(u) \neq y(v) \iff x(\{u,v\}) = 1$.
Define a new vertex assignment  $y' \in \{0,1\}^V$ as follows
\begin{equation*}
y'(v) := \left\{
\begin{array}{ll}
1 - y(v) & \textrm{if } v \in V_1 \\
y(v) & \, \textrm{otherwise.} \\
\end{array}
\right. 
\end{equation*}
Let $x' \in \{0,1\}^E$ be the cut corresponding to $y'$; i.e.,  for all $\{u,v\} \in E$ it holds
 $x'(\{u,v\}) = 1$ if $y(u) \neq y(v)$, and $x'(\{u,v\}) = 0$ otherwise. Note that for all $e \in E \setminus \delta(V_1)$ it holds that $x'(e) = x(e)$. For all $e \in \delta(V_1)$ it holds that 
 $x'(e) = 1 - x(e)$. In particular, 
 \begin{equation}
 \label{prop:trinew:4}
 x'(\{v_1,v_2\}) =  x'(\{v_1,v_3\}) = 1,
 \end{equation}
 because of $x(\{v_1,v_2\}) =  x(\{v_1,v_3\}) = 0$.
  Thus, we obtain
 \begin{align*}
 \sum_{e \in E} w(e) x'(e) &=  \sum_{e \in E \setminus \delta(V_1) } w(e) x'(e) + \sum_{e \in \delta(V_1) } w(e) x'(e)  \\
  &= \sum_{e \in E \setminus \delta(V_1) } w(e) x(e) + \sum_{e \in \delta(V_1) } w(e)  x'(e) \\
   &\stackrel{\eqref{prop:trinew:4}}{=} \sum_{e \in E \setminus \delta(V_1) } w(e) x(e) \\
   & + \sum_{e \in \delta(V_1) \setminus \left \{  \{v_1,v_2\},  \{v_1,v_3\} \right \}  } w(e) x'(e) + w(\{v_1,v_2\})+ w(\{v_1,v_3\})  \\
  &\stackrel{\eqref{prop:trinew:1}}{\geq} \sum_{e \in E \setminus \delta(V_1) } w(e) x(e) \\
  & + \sum_{e \in \delta(V_1) \setminus \left \{  \{v_1,v_2\},  \{v_1,v_3\} \right \}  } (w(e) x'(e) +  |w(e)|) \\
    &\geq \sum_{e \in E \setminus \delta(V_1) } w(e) x(e) \\
    & + \sum_{e \in \delta(V_1) \setminus \left \{  \{v_1,v_2\},  \{v_1,v_3\} \right \}  } w(e) (1- x'(e)) \\
    &= \sum_{e \in E \setminus \delta(V_1) } w(e) x(e) \\
    & + \sum_{e \in \delta(V_1) \setminus \left \{  \{v_1,v_2\},  \{v_1,v_3\} \right \}  } w(e) x(e) \\
  &= \sum_{e \in E \setminus \delta(V_1) } w(e) x(e)  \sum_{e \in \delta(V_1) } w(e) x(e)   \\
  &=  \sum_{e \in E} w(e) x(e),
 \end{align*}
 which concludes the proof.
\end{proof}

As for the previous triangle test, we only consider the simple cases of  $\{v_1\}$, $\{v_2\}$, $\{v_1,v_3\}$, and $\{v_2,v_3\}$ for  $V_1$ and $V_2$ in our implementation.

Note that Lange et al.~\cite{Lange19} furthermore propose a generalization of Proposition~\ref{prop:redtriag} to more general connected subgraphs. Also Proposition~\ref{prop:redtriag2} could be generalized in a similar way.
However, since we only implemented reductions tests for the triangle conditions, we do not provide details on this generalization here. We also note that exploiting this more general condition for effective practical reductions is not straight-forward and seems computationally considerably more expensive than the triangle tests.

\subsection{Further reduction techniques}
\label{sec:reduction:further}

In the following, we propose two additional reduction methods, based on different techniques.
 One uses the reduced-costs of the LP-relaxation of Formulation~\ref{form:dcut},
and one exploits simple symmetries in \mc instances.

We start with the latter. If successful, the test based on the following proposition allows one to contract two (possibly non-adjacent) vertices.
\begin{proposition}
\label{prop:redsym}
Assume there are two distinct vertices $u, v \in V$ such that $N(u) \setminus \{v\} = N(v) \setminus \{u\}$.
 If there exists a non-zero $\alpha$ such that $w(e) = \alpha w(e')$ for all pairs $e,e'$ with $e \in \delta(u) \setminus \{u,v\}, e' \in \delta(v) \setminus \{u,v\}, e \cap e' \neq \emptyset$, and moreover
 \begin{itemize}
 	\item $\{u,v\} \notin E ~\vee~ w(\{u,v\}) < 0$ in case of $\alpha > 0$
 	\item $\{u,v\} \notin E ~\vee~ w(\{u,v\}) > 0$ in case of $\alpha < 0$,
 \end{itemize}
 then there is an optimal vertex solution $y \in \{0,1\}^V$ to $I_{MC}$ such that $y(u) = y(v)$ if $\alpha > 0$, and $y(u) = (1-y(v))$ if $\alpha < 0$.
\end{proposition}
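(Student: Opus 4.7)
The plan is to take an arbitrary optimal vertex assignment $y \in \{0,1\}^V$ and show that, while holding $y(z)$ fixed for every $z \in V \setminus \{u,v\}$, one can choose the pair $(y(u), y(v))$ to satisfy the claimed relation without decreasing the cut value. Since every edge not incident to $u$ or $v$ contributes identically in all four possible configurations $(y(u), y(v)) \in \{0,1\}^2$, it suffices to compare the four restricted contributions $F_{00}, F_{01}, F_{10}, F_{11}$ that come from edges incident to $u$ or $v$ only. This reduces the whole statement to a finite comparison.

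To compute these, I would write $Z := N(u) \setminus \{v\} = N(v) \setminus \{u\}$ and split $Z$ into $Z_0 := \{z \in Z : y(z) = 0\}$ and $Z_1 := \{z \in Z : y(z) = 1\}$. Setting $B_i := \sum_{z \in Z_i} w(\{v,z\})$ and $w_{uv} := w(\{u,v\})$ if $\{u,v\}\in E$ and $0$ otherwise, the relation $w(\{u,z\}) = \alpha\, w(\{v,z\})$ lets each $F_{ij}$ be written as a simple combination of $B_0, B_1, \alpha B_0, \alpha B_1$ and $w_{uv}$, where $w_{uv}$ contributes only to $F_{01}$ and $F_{10}$. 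Introducing the single parameter $t := B_1 - B_0$, every relevant difference $F_{ij} - F_{i'j'}$ collapses to an expression of the form $\pm t - w_{uv}$ or $\mp \alpha t - w_{uv}$.

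For the case $\alpha > 0$ the hypothesis gives $w_{uv} \leq 0$, and I must show that both $F_{01}$ and $F_{10}$ are dominated by at least one of $F_{00}, F_{11}$. A short case split on the sign of $t$ suffices: if $t \geq 0$ then $F_{00} - F_{01}$ and $F_{00} - F_{10}$ are of the right sign using $-w_{uv} \geq 0$ and $\alpha t \geq 0$; if $t < 0$ one picks $F_{11}$ instead, using $-\alpha t > 0$. From this, the assignment $y' \in \{0,1\}^V$ obtained from $y$ by replacing $(y(u),y(v))$ with the dominant aligned configuration yields a cut of weight at least $w^T y$, and satisfies $y'(u) = y'(v)$. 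The case $\alpha < 0$ is entirely analogous, with $w_{uv} \geq 0$ and the roles of aligned and anti-aligned configurations swapped, so that $y'(u) = 1 - y'(v)$ in the modified solution.

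The main obstacle here is not conceptual but bookkeeping: one has to handle uniformly the possibility that $\{u,v\}$ does or does not lie in $E$ (cleanest by setting $w_{uv} = 0$ in the absent case), and one has to verify that the correct sign combination of $\alpha$, $w_{uv}$ and $t$ is exploited in each sub-case so that the chosen flip is actually weakly improving. A further minor point to check is the degenerate case $Z = \emptyset$, in which $t = 0$ and the statement follows trivially from the sign hypothesis on $w_{uv}$.
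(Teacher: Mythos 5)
Your proof is correct and, at its core, takes the same route as the paper: both arguments fix all vertices outside $\{u,v\}$ and perform a local exchange on the pair $(y(u),y(v))$, using the proportionality $w(\{u,z\})=\alpha\,w(\{v,z\})$ together with the sign condition on $w(\{u,v\})$ to show that an aligned (resp.\ anti-aligned) configuration weakly dominates. The paper packages the same case split via a without-loss-of-generality comparison of the two weighted incident-edge contributions and then copies $y(u)$ onto $v$, whereas you enumerate the four configurations explicitly through $t=B_1-B_0$; the content is identical.
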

\begin{proof} 
 We consider only the case $\alpha > 0$; the case  $\alpha < 0$ can be shown in a similar way.
  Let $y \in \{0,1\}^V$ with $y(v) \neq y(u)$. We will construct a $y' \in \{0,1\}^V$ with $y'(v) = y'(u)$ such that the weight of the induced cut of $y'$ is not lower than the weight of the induced cut of $y$. In this way, the proof is complete, because we can apply this construction also for any optimal vertex assignment
 
Let $x \in \{0,1\}^E$ be the induced cut of $y$. Assume that 
\begin{equation}
\label{prop:sym:2}
\sum_{e \in \delta(u) \setminus \{u,v\}} w(e) x(e) \geq   \alpha \sum_{e \in \delta(v) \setminus \{u,v\}} w(e) x(e).
\end{equation}
Otherwise, switch the roles of $u$ and $v$ in the following.

Let $f: \delta(v) \setminus \left\{ \{u,v\}  \right\} \to \delta(u) \setminus  \left\{ \{u,v\} \right\}$ such that $e \cap f(e) \neq \emptyset$ for all $e \in  \delta(v) \setminus \{u,v\}$. Note that $f$ is well-defined.
Define a new cut  $x' \in \{0,1\}^E$ as follows
\begin{equation*}
x'(e) := \left\{
\begin{array}{ll}
x(e) & \textrm{if } e \in E \setminus \delta(v) \\
x(f(e) & \textrm{if } e \in \delta(v) \setminus \left\{ \{u,v\}  \right\} \\
0 & \textrm{if } e = \{u,v\} \\
\end{array}
\right. 
\end{equation*}
Because of~\eqref{prop:sym:2} and $\{u,v\} \notin E ~\vee~ w(\{u,v\}) < 0$ it holds that $w^T x' \geq  w^T x$.
\end{proof}
The condition of Proposition~\ref{prop:redsym} can be checked efficiently in practice by using hashing techniques, similar to the ones used for the \emph{parallel rows} test for mixed-integer programs~\cite{Achterberg20}.

A well-known reduction method for binary integer programs, which was already used for \mc~\cite{Barahona89}, is as follows.
Consider a feasible solution $\tilde{x}$ to the LP-relaxation of Formulation~\ref{form:dcut}, with reduced-costs $\tilde{w}$, and with objective value $\tilde{U}$. Further, let $L$ be the weight of a graph cut. If for an $e\in E$ it holds that $\tilde{x}(e) = 0$ and $\tilde{U} -\tilde{w}(e) < L$, one can fix $x(e) := 0$. If for a $e\in E$ it holds that $\tilde{x}(e) = 1$ and $\tilde{U} +\tilde{w}(e) < L$, one can fix $x(e) := 1$. This method can also be used for LP-solutions (obtained during separation) that satisfy only a subset of the cycle inequalities~\eqref{form:dcut:1}.
In the following, we will only consider optimal LP-solutions $\tilde{x}$ (possibly for a subset of the cycle inequalities). Since we furthermore consider only LP-solutions obtained by the Simplex algorithm, all non-zero variables have reduced-cost $0$.

 From incident fixed edges one obtains a (non-unique) partial vertex assignment $y': V' \to \{0,1\}$. This assignment can be used to obtain additional fixings, as detailed in the following proposition.
\begin{proposition}
\label{prop:redcost}
Let $\tilde{x}$ be an optimal solution to the LP-relaxation of Formulation~\ref{form:dcut}, with reduced-costs $\tilde{w}$, and objective value $\tilde{U}$. Let $L$ be an upper bound on the weight of a maximum-cut.
Let $V' \subset V$ and $y': V' \to \{0,1\}$ such that for any optimal vertex assignment $y \in \{0,1\}^V$ it holds that $y(v) = y'(v)$ for all $v \in V'$.
Further, let $u \in V \setminus V'$ and define
\begin{equation*}
\tilde{\Delta}_0 := \sum_{\{u,v\} \in \delta(u) | v \in V',y(v) = 0} \tilde{w}(\{u,v\})
\end{equation*}
and
\begin{equation*}
\tilde{\Delta}_1 := \sum_{\{u,v\} \in \delta(u) | v \in V',y(v) = 1} \tilde{w}(\{u,v\}).
\end{equation*}
For any optimal vertex assignment  $y \in \{0,1\}^V$ the following conditions hold.
If $L + \tilde{\Delta}_0 > \tilde{U}$, then $y(u) = 0$.
If $L + \tilde{\Delta}_1 > \tilde{U}$, then $y(u) = 1$.
\end{proposition}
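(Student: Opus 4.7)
The plan is a reduced-cost fixing argument that couples the LP duality bound for the relaxation of Formulation~\ref{form:dcut} with the partial vertex information already encoded in $y'$. By symmetry only the first implication needs verifying in detail, as the second follows after swapping the roles of $y'(v)=0$ and $y'(v)=1$ throughout.

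I would argue the contrapositive. Let $y \in \{0,1\}^V$ be any optimal vertex assignment and $x \in \{0,1\}^E$ its induced cut; assume $y(u) = 1$ and aim to deduce $L + \tilde{\Delta}_0 \leq \tilde{U}$. Since $y$ must extend $y'$ on $V'$ by hypothesis, the choice $y(u) = 1$ forces $x(\{u,v\}) = 1$ for every $\{u,v\} \in \delta(u)$ with $v \in V'$ and $y'(v) = 0$. The preceding text introduces $L$ as the weight of a feasible cut, so optimality of $y$ gives $w^{T} x \geq L$; the phrase ``upper bound on the weight of a maximum-cut'' in the statement appears to be a typo for ``lower bound'', as is needed for the conclusion to make sense.

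Next, I would invoke the standard reduced-cost inequality for a Simplex-optimal LP relaxation. Under the sign convention already introduced for the reduced-cost fixing above the proposition, every integer-feasible $x$ satisfies
\begin{equation*}
w^{T} x \;\leq\; \tilde{U} \;-\; \sum_{e:\ \tilde{x}(e) = 0} \tilde{w}(e)\, x(e) \;+\; \sum_{e:\ \tilde{x}(e) = 1} \tilde{w}(e)\,(1-x(e)),
\end{equation*}
and each of the two correcting sums is non-positive. Restricting attention to edges $\{u,v\} \in \delta(u)$ with $v \in V'$ and $y'(v) = 0$, the forcing $x(\{u,v\}) = 1$ derived above contributes $\tilde{w}(\{u,v\})$ to the bound reduction when $\tilde{x}(\{u,v\}) = 0$, and nothing to either sum when $\tilde{x}(\{u,v\}) = 1$. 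The assumption that all non-zero variables have reduced cost zero gives $\tilde{w}(\{u,v\}) = 0$ in the latter case, so in both cases the edge contributes precisely $\tilde{w}(\{u,v\})$ to $\tilde{\Delta}_0$. Summing and discarding the remaining non-positive contributions yields $w^{T} x \leq \tilde{U} - \tilde{\Delta}_0$; combined with $w^{T} x \geq L$ this gives $L + \tilde{\Delta}_0 \leq \tilde{U}$, as desired.

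The main obstacle is exactly this bookkeeping: one has to verify that each forced edge contributes precisely $\tilde{w}(\{u,v\})$ to the bound reduction, regardless of whether $\tilde{x}(\{u,v\})$ equals $0$ or $1$, and this relies squarely on the Simplex-optimality claim that basic variables carry zero reduced cost. Once this identification is made, the rest of the argument is a one-line combination of two inequalities, and the $\tilde{\Delta}_1$ implication is obtained by the identical computation with $y'(v) = 1$ in place of $y'(v) = 0$.
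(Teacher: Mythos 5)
Your argument is correct and is exactly the standard reduced-cost-fixing computation that the paper invokes with the single line ``The proposition follows from standard linear programming results,'' so there is no divergence in approach --- you have simply supplied the bookkeeping the paper omits. You also rightly flag the slips in the statement: $L$ must be a \emph{lower} bound (the weight of a known cut, as in the paragraph preceding the proposition), and the conditions $y(v)=0$ and $y(v)=1$ in the definitions of $\tilde{\Delta}_0$ and $\tilde{\Delta}_1$ should read $y'(v)=0$ and $y'(v)=1$.
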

The proposition follows from standard linear programming results.
If one of the conditions of the proposition is satisfied, one can fix all edges between $u$ and $V'$.

\section{Solving to optimality: branch-and-cut }
\label{sec:branch-and-cut}

This section describes how to incorporate the methods introduced so far together with additional components in an exact branch-and-cut algorithm. 
This branch-and-cut algorithm has been implemented based on the academic MIP solver \scip~\cite{Bestuzheva2021}. Besides
being a stand-alone MIP solver, \scip provides a general branch-and-cut
framework. Most importantly, we rely on \scip for organizing the branch-and-bound search, and the cutting plane management. 
Most native, general-purpose algorithms of \scip such as primal heuristics, conflict analysis, or generic cutting planes are deactivated by our solver for performance reasons.

	\subsection{Key components}
	
	In the following, we list the main components of the branch-and-cut framework that was implemented for this article.
	
	\paragraph{Presolving}
For presolving, the reduction methods described in this article are executed iteratively within a loop. This loop is reiterated as long as at least one edge has been contracted during the previous round, and the predefined maximum number of loop passes has not been reached yet.

	\paragraph{Domain propagation}
For domain propagation we use the reduced-cost criteria described in Section~\ref{sec:reduction:further}. The simple single-edge fixing is done by the generic reduced-costs propagator plug-in of \scip. For the new implication based method we have implemented an additional propagator.

A classic propagation method, e.g.~\cite{Barahona89}, is as follows: Consider the connected components induced by edges that have been fixed to 0 or 1. All additional edges in these connected components can be readily fixed.
However, this technique brought no benefits in our experiments, since the variable values of such edges are implied by the cycle inequalities~\eqref{form:dcut:1}.



	\paragraph{Decomposition}
It is well-known that connected components of the graph underlying a \mc instance can be solved separately, see e.g.~\cite{Juenger2021}.
More generally, one can solve biconnected components separately (this simple observation does not seem to have been mentioned in the \mc literature so far).
Since several benchmark instances used in this article contain many very small biconnected components, we solve components with a limited number of vertices by enumeration. In this way, we avoid the overhead associated with creating and solving a new \mc instance for each subproblem.

	
	\paragraph{Primal heuristics}

Primal heuristics are an important component of practical branch-and-bound algorithms: First, to find an optimal solution (verified by the dual-bound), and second to find strong primal bounds that allow the algorithm to cut off many branch-and-bound nodes.
For computing an initial primal solution, we have implemented the \mc heuristic by Burer et. al.~\cite{Burer02}.
We further use the Kernighan-Lin algorithm~\cite{Kernighan70} to improve any (intermediary) solution found by the algorithm of Burer et. al.
Additionally, we use this combined algorithm as a local search heuristic whenever a new best primal solution has been found during the branch-and-bound search. In this case, we initiate the heuristic with this new best solution (which can be done by translating the solution into the two-dimensional angle vectors required by the heuristic).

We also  implemented the spanning-tree heuristic from Barahona et al.~\cite{Barahona89}, which uses a given (not necessarily optimal) LP-solution to find graph cuts. We execute this heuristic after each separation round.

	\paragraph{Separation}
In each separation round, we initially try to find violated cycle inequalities on triangles of the underlying graph (by enumerating some triangles).
Next, we use shortest-path computations to find additional violated cuts, as described in Section~\ref{sec:relaxation:fast} and Section~\ref{sec:relaxation:post}. Among the speed-up techniques, we have not (yet) implemented the contraction of edges, since
 the separation routine is already quite fast and other implementations seemed more promising.

	\paragraph{Branching}
We simply branch on the edge variables and use the \emph{pseudo-costs} branching strategy of \scip, see Gamrath~\cite{gamrath2020} for more details.
	Initial experiments showed that the default branching strategy of \scip, \emph{reliable pseudo-costs} branching, spends too much time on strong-branching to be competitive.

	\subsection{Parallelization}

 For parallelizing our solver, we use the {\em Ubiquity Generator Framework} (\UG)~\cite{Shinano16}, a software package 
 to parallelize branch-and-bound based solvers---for both shared- and distributed-memory environments.	
 \UG implements a \emph{Supervisor-Worker load coordination scheme}~\cite{Ralphs2018}.  
 Importantly, Supervisor functions make decisions about the load balancing without actually
 	storing the data associated with the branch-and-bound (B\&B) search tree. 

A major problem of parallelizing the B\&B search lies in the simple fact that parallel resources can only be used efficiently once the number of open B\&B nodes is sufficiently large. Thus, we employ so-called \emph{racing ramp-up}~\cite{Ralphs2018}: Initially, each thread (or process) starts the solving process of the given problem instance, but each with different (customized) parameters and random seeds.  Additionally, we reserve some threads to exclusively run primal heuristics. During the racing, information such as improved primal solutions or global variable fixings is being exchanged among the threads.
 We terminate the racing once a predefined number of open B\&B nodes has been created by one thread, or the problem has been solved. Once the racing has been terminated and the problem is still unsolved, the open nodes are distributed among the threads and the actual parallel solving phase starts.

\section{Computational results}
\label{sec:computational}

This section provides computational results on a large collection of \mc and \qubo instances from the literature. We
look at the impact of individual components, and furthermore compare the new solver with the state of the art for the exact solution of  \mc and \qubo instances.
An overview of the test-sets used in the following is given in Table~\ref{tab:instances}. The second column gives the number of instances per test-sets. The third and fourth columns give the range of nodes  and edges in the case of \mc, or the range of variables and non-zero coefficients in the case of \qubo.

Only a few exact \mc or \qubo solvers are publicly available, and some, such as \biqmac~\cite{Rendl10} and \biqbin~\cite{Hrga19}, only as web services.
Still, the state-of-the-art solvers \biqcrunch~\cite{Krislock17} and \madam~\cite{Hrga21} are freely available, even with source code.
However, we have observed that both of these solvers are outperformed on most instances listed in Table~\ref{tab:instances} by the recent release 9.5 of the state-of-the-art commercial solver \gurobi~\cite{gurobi}.
\gurobi solves mixed-integer quadratic programs, which are a superclass of \qubo. In fact, the standard benchmark library for quadratic programs, \qplib~\cite{Furini19}, contains various \qubo instances.
Compared to the previous release 9.1, \gurobi 9.5 has hugely improved on \qubo (and thereby also \mc) instances. For example, while \gurobi 9.1 could not solve any of the \emph{IsingChain} instances from Table~\ref{tab:instances} in one hour (with one thread), \gurobi 9.5 solves all of them in less than one minute.
Thus, in the following, we will use \gurobi 9.5 as a reference for our new solver. 
We will also provide results from the literature, but the comparison with \gurobi 9.5 allows us to obtain results in the same computational environment.


%


Very recently, an article describing a new solver, called \mcsparse, specialized to sparse \mc and \qubo instances was published~\cite{mcsparse}. The computational experiments in~\cite{mcsparse} demonstrate that \mcsparse outperforms previous \mc and \qubo solvers on sparse instances.  
 Like \biqmac~\cite{Rendl10} and \biqbin~\cite{Hrga19}, this solver is only available via a web interface. However, we will still provide some comparison with our solver in the following by using the results published in~\cite{mcsparse}.

The computational experiments were performed on a cluster of Intel Xeon Gold 5122 CPUs with 3.60~GHz, and 96 GB RAM per compute node.
We ran only one job per compute node at a time, to avoid a distortion of the run time measures---originating for example from shared (L3) cache.
For our solver, we use the commerical \cplex 12.10~\cite{cplex} as LP-solver, although our solver also allows for the use of the non-commercial (but slower) \soplex~\cite{Bestuzheva2021} instead.
For \gurobi we set the parameter \emph{MipGap} to $0$. Otherwise, we would obtain suboptimal solutions even for many instances with integer weights.


\begin{table}[ht]
		 	\centering
		 	\scriptsize
		 		\setlength{\tabcolsep}{2pt} 
		 	\begin{tabular*}{1.0\textwidth}{@{\extracolsep{\fill}}lrccl@{}}
		 			 		\toprule
		 		Name &  \# & $|$V$|$ & $|$E$|$ & Description  \\
		 		\midrule
	DIMACS        & 4 &   512 - 3375  &  1536-10125 &  Instances introduced at the\\
		 				 				 															&        &      & & 7th DIMACS Challenge.\\											
		 
		Mannino  & 4 &   48 - 487  &  1128-8511 & Instances from a radio frequency assignment problem,\\
			 				 				 															&        &      & & introduced in~\cite{Bonato2014}.\\		 
	PM1s$_{100}$ & 10 &   100  &  495 &  Instances generated with the \emph{rudy} framework, \\
		 				 				 															&        &      & & 
		 				 				 															from the BiqMac Lib~\cite{wiegele2007biq}. \\		 				 				 															
	W01$_{100}$  & 10 &   100  &  495 &  Instances generated with the \emph{rudy} framework, \\
		 				 				 															&        &      & & from the BiqMac Lib~\cite{wiegele2007biq}.\\	
    K64-chimera   & 80 &   2049  &  8064 &  Instances on D-Wave Chimera graphs\\
						&        &      & & introduced in~\cite{Juenger2021}.\\	
	
	Kernel       & 14 &   33 - 2888  &  91-2981 &  Instances from various sources  \\
				 									&        &      & & collected by~\cite{Ferizovic20}. \\	
	
	IsingChain   & 30 &   100 - 300  &  4950-44850 &  Instances from an application in statistical physics, \\
												&        &      & & introduced in~\cite{Liers2004} \\	
	
	Torus       & 18 &   100 - 343  &  200-1029 &  2D and 3D torus instances from an application in  \\
												&        &      & &  statistical physics, introduced in~\cite{Liers2004}\\
		 				 	\\			 															 				 				 		\midrule	
		 				 	\\													
		Name &  \# & $n$ & $nnz$ & Description  \\
		 		\midrule		 				 				 		 				 				 				
		QPLIB & 22 &   120 - 1225  &  602-34876 &  QUBO instances from the standard benchmark software  \\
			 				 				 															&        &      & & for quadratic programs, see~\cite{Furini19}. \\		 											
			BQP100 & 10 &   100  &  471-528 &  Randomly generated instances \\
		 				 															&        &      & & introduced in ~\cite{beasley1998heuristic}. \\
		 	BQP250 & 10 &   250  & 3039-3208  &  Randomly generated instances \\
		 			 				 															&        &      & & introduced in~\cite{beasley1998heuristic}. \\			 																
	 	BE120.3 & 10 &   120  &  2176-2253 &  Randomly generated instances \\
		 			 				 															&        &      & & introduced in \cite{BillionnetE07}. \\
	 	BE250 & 10 &   250  &  3268-3388 &  Randomly generated instances \\
		 			 				 															&        &      & & introduced in \cite{BillionnetE07}. \\	
		GKA$_{a\text{-}d}$ & 35 &   20 - 125  &  204-7788 &  Randomly generated instances with different densities, \\
													&        &      & & introduced in~\cite{glover1998adaptive} \\		 		
 																			 
		 		\bottomrule
		 	\end{tabular*}
		 	\caption{Details of \mc (upper part) and \qubo (lower part) test-sets used in this article.}
		 	\label{tab:instances}
\end{table}

\subsection{Individual components}

This section takes a look at individual algorithmic components introduced in Section~\ref{sec:relaxation} and Section~\ref{sec:reduction}.

First, we show the run time required for our improved separation of cycle inequalities.
Table~\ref{tab:runparts} reports per test-set the average (arithmetic mean) percentual time required for the separation procedure (column four), as well as for solving the LP-relaxations (column five). Recall that the latter is done by \cplex 12.10, one of the leading commercial LP-solvers. For more than half of the test-sets the average time required for the separation is less than 10 \%. Also for the remaining test-sets it is always less than 20 \%. Notably, this time also includes adding the cuts (including the triangle inequalities) to the cut pool, which requires additional computations.
The time could be further reduced by contracting 0-weight edges in the auxiliary graph, as described in Section~\ref{sec:relaxation}. Notably, both the separation time and LP-solution time are very small for the \emph{IsingChain} and~\emph{Kernel} instances. This behavior is due to the fact that many of these instances are already solved during presolving, as detailed in the following,

	 			\begin{table}[ht]
		 			\centering
		 						\footnotesize
		 			\begin{tabular*}{0.8\textwidth}{@{\extracolsep{\fill}}lrrr@{}}
		 			\toprule
		 			Name &  \# & Sepa-time\,[\%] & LP-time\,[\%] \\
		 			\midrule
BE120.3  & 10 & 2.5 &    79.3 \\		 			
BE250 & 10 &     2.8 &    84.3 \\
BQP100 & 10 &     3.0 &    32.0 \\
BQP250 & 10 &     2.6 &    86.2 \\
GKA$_{a\text{-}d}$ & 35 &     6.7 &    49.1 \\
IsingChain & 30 &     0.0 &     0.0 \\
K64-chimera & 80 &    10.1 &    58.3 \\
Kernel & 14 &     1.3 &     4.8 \\
PM1s$_{100}$ & 10 &    13.1 &    78.3 \\
QPLIB & 22 &    18.1 &    65.5 \\
Torus & 18 &    16.7 &    15.5 \\
W01$_{100}$ & 10 &    12.0 &    59.3 \\				
		 			\hline
		 			\end{tabular*}
		 			\caption{Average times spent in separation and (re-) optimization of the LP for \mc and \qubo test-sets.}\label{tab:runparts}
		 			\end{table}		
	
Next, we demonstrate the strength of the reduction methods implemented for this article. Only results for the \mc test-sets are reported.
 We show the impact of the \mc reduction techniques from~\cite{Lange19} described in Section~\ref{sec:reduction} as well as the \qubo reduction techniques from~\cite{Glover18}---by using the standard problem transformations between \qubo and \mc.
 We refer to the combination of these two as~\emph{base preprocessing}. Additionally, the methods described in Proposition~\ref{prop:redtriag2} and Proposition~\ref{prop:redsym}  are referred to as
 \emph{new techniques}. Note that Proposition~\ref{prop:redcost} cannot be applied, because no reduced-costs are available.

Table~\ref{tab:resultspresolving} shows in the first column the name of the test-set, followed by its number of instances.
The next columns show the percentual average number of nodes and edges of the instances after the preprocessing without (column three and four), and with (columns five and six) the new methods. The last two columns report the percentual relative change between the previous results.
The run time is not reported, because it is on all instances below 0.05 seconds. 

The new reduction techniques have an impact on five of the eight test-sets.
 The strongest reductions occur on \emph{Kernel} and \emph{IsingChain}. We remark that the symmetry-based reductions from Proposition~\ref{prop:redsym} have a very small impact, and only allow for contracting a few dozen additional edges on \emph{Kernel}. We also note that while the \emph{IsingChain} instances are already drastically reduced by the base preprocessing, the new methods still have an important impact, as they reduce the number of edges of several instances from more than a thousand to less than 300.
The \emph{IsingChain} instances were already completely solved by reduction techniques in Tavares~\cite{tavares2008}, by using maximum-flow based methods. However the run time was up to three orders of magnitudes larger than in our case. The machine used by Tavares had a Pentium 4 CPU at 3.60 GHz, thus being significantly slower than the machines used for this article. Still, also when taking the different computing environments into account, the run time difference is huge.

	\begin{table}[ht]
			\centering
		\footnotesize
		\begin{tabular*}{1.0\textwidth}{@{\extracolsep{\fill}}llrrrrrr@{}}
			\toprule
			& &       \multicolumn{2}{c}{base preprocessing} & \multicolumn{2}{c}{+new techniques} & \multicolumn{2}{c}{relative change} \\
			\cmidrule(lr){3-4} \cmidrule(lr){5-6}   \cmidrule(lr){7-8}
			Test-set  & \#  &  $|$V$|$\,[\%] & $|$E$|$\,[\%] &  $|$V$|$\,[\%] & $|$E$|$\,[\%] &  $|$V$|$\,[\%] & $|$E$|$\,[\%]  \\
			\midrule

IsingChain & 30 &  6.1 &     0.8 &   1.1 &     $<$0.05  & \textbf{-82.0} & \textbf{$<$-93.8} \\
K64-chimera & 80 &   3.1 &     4.6 &   3.1 &     4.6  & 0.0 & 0.0 \\
Kernel & 14 &   24.1 &    30.1 &  16.4 &    20.6  & \textbf{-32.0} & \textbf{-31.6} \\
Mannino & 4 & 64.1 &    69.3 &   63.2 &    68.7  & \textbf{-1.4} & \textbf{-0.9} \\
Torus & 18 & 80.6 &    87.5 &    78.5 &    85.2  & \textbf{-2.6} & \textbf{-2.6} \\
W01$_{100}$ & 10 &   99.1 &    94.8 &  99.1 &    94.8  & 0.0 & 0.0 \\
DIMACS & 4 &  97.0 &    98.9 &  96.9 &    98.9  & \textbf{-0.1} & 0.0 \\
PM1s$_{100}$ & 10 &  99.7 &    99.9  &   99.7 &    99.9  & 0.0 & 0.0 \\

			\bottomrule
		\end{tabular*}
		\caption{Average remaining size of \mc instances after preprocessing.}
			\label{tab:resultspresolving}
\end{table}

\subsection{Exact solution}
\label{sec:comp_exact}

 This section compares \gurobi 9.5 and our new solver with respect to the mean time, the maximum time, and the number of solved instances.
For the mean time we use the shifted geometric mean~\cite{Achterberg07a} with a shift of $1$ second. In this section, we use only single-thread mode.
 Table~\ref{tab:exact} provides the results for a time-limit of one hour.
 The second column shows the number of instances in the test-set. Columns three gives the number of instances solved by  \gurobi, column four the number of instances solved by our solver.
  Column five and six show the mean time taken by \gurobi and our solver. The next column gives the relative speedup of our solver.
The last three columns provide the same information for the maximum run time,  Speedups that signify an improved performance of the new solver are marked in bold.

It can be seen that our solver consistently outperforms~\gurobi 9.5---both with respect to mean and maximum time. Also, it solves on each test-set at least as many instances as \gurobi. 
The only test-set where \gurobi performs better is \emph{BQP100}, which, however, can be solved by both solvers in far less than a second.

On the other test-sets, the mean time of the new solver is better, often by large factors (up to $60.07$). On the instance sets that can both be fully solved, the maximum time taken by the new solver is in most cases also much smaller.
 On five of the test-sets, the new solver can solve more instances to optimality than \gurobi 9.5.

\begin{table}[hb]
		\centering
		\footnotesize
		\setlength{\tabcolsep}{2pt}
		\begin{tabular*}{1.0\textwidth}{@{\extracolsep{\fill}}llrrrrrrrr@{}}
			\toprule
			& &     \multicolumn{2}{c}{\# solved} &   \multicolumn{3}{c}{mean time (sh. geo. mean)} & \multicolumn{3}{c}{maximum time} \\
			\cmidrule(lr){3-4} \cmidrule(lr){5-7}   \cmidrule(lr){8-10}
			Test-set  & \#  &   Grb   & new &  Grb\,[s]   & new\,[s] & speedup &~~~    Grb\,[s]   & new\,[s] & speedup  \\
			\midrule
      		    PM1s$_{100}$            &   10  & 10 & 10 &  192.3         &   21.0      &\textbf{   9.16}&  303.3       &  48.6	        &\textbf{   6.24}\\
       		    W01$_{100}$            &   10  & 10 & 10 &  44.1           &  3.1     &\textbf{  14.23}&  97.1      &  21.4		        &\textbf{   4.54}\\
       		    Kernel            &   14  & 14 & 14 &  0.7       &  0.1    &\textbf{   7.00}&  14.3      &  1.1		        &\textbf{  13.00}\\
                 IsingChain            &   30  & 30 & 30 &  1.3      &  $<$0.05    &\textbf{  $>$26.00}&  41.0     &  $<$0.05	   &\textbf{ $>$820.00}\\
                 Torus            &   18  & 18 & 18 &  3.8      &  0.4    &\textbf{   9.50}&  628.0     &   7.6	   &\textbf{  82.63}\\
                 K64-chimera        &   80  & 80 & 80 &  90.1       &  1.5      &\textbf{  60.07}&  195.4     & 8.6		        &\textbf{  22.72}\\
                 QPLIB        &   22  & 8 & \textbf{13} &  687.4     &  173.3     &\textbf{   3.97}&  3600      &   3600		        &   1.00\\
                 BQP100        &   10  & 10 & 10 &  0.1     &  0.1    &   1.00&  0.2      &   0.4		        &   0.50\\
                 BQP250        &   10  & 0 & \textbf{7} &  3600     &  654.1      &\textbf{   5.50}&  3600      &   3600		        &   1.00\\
               BE120.3        & 10 & 9 & \textbf{10} & 265.6 & 60.2 &   \textbf{   4.41} & 3600 & 820.0  & \textbf{   $>$4.40}\\
                 BE250        &   10   & 0 & \textbf{8} &  3600     &  609.3     &\textbf{   5.91}&  3600      &   3600		        &   1.00\\
        GKA$_{a\text{-}d}$    &   35  & 29 & \textbf{31} &  6.5     &  6.0     &\textbf{   1.08}&  3600      &   3600		        &   1.00\\
 
      					\bottomrule
		\end{tabular*}

		\caption{Comparison of \gurobi 9.5 (\emph{Grb}) and new solver (\emph{new}). }  
				\label{tab:exact}
\end{table}

Finally, we compare our solver with the very recent \qubo and \mc solver~\mcsparse, specialized on sparse instances.
In Table~\ref{tab:mcsparse} we provide an instance-wise comparison of our solver and \mcsparse. We provide the number of branch-and-bound nodes (columns three and four) and the run times (columns five and six) of \mcsparse and our solver per problem instance.
We use the 14 instances that were selected in the article by Charfreitag et al.~\cite{mcsparse} as being representative of their test-bed. The first seven instances are \mc and the last seven  \qubo problems.
 Charfreitag et al.~\cite{mcsparse} only use one thread per run. Their results were obtained on a system with AMD EPYC 7543P CPUs at 2.8 GHz, and with 256 GB RAM. CPU benchmarks\footnote{\url{https://www.cpubenchmark.net/singleThread.html\#server-thread}} consider this system to be faster than the one used in this article, already in single-thread mode.
 Furthermore, \mcsparse is embedded into \gurobi (version 9.1), which is widely regarded as the fastest commercial MIP-solver, whereas our solver is based on the non-commercial \scip, although we also use a commercial LP-solver.

 \begin{table}[ht]
 		\centering
 		\footnotesize
 		\setlength{\tabcolsep}{2pt}
 		\begin{tabular*}{1.0\textwidth}{@{\extracolsep{\fill}}lrrrrrr@{}}
 			\toprule
 			& &   & \multicolumn{2}{c}{\# B\&B nodes} & \multicolumn{2}{c}{run time} \\
 			\cmidrule(lr){4-5} \cmidrule(lr){6-7}  
 			Name  & $|$V$|$  &  $|$E$|$ &   MS   & new  &~~~    MS\,[s]   & new\,[s]   \\
 			\midrule
 		pm1s\_100.3      &   100  & 495 & 341     &    737                &     48.2  &  48.0		     \\
 		pw01\_100.0      &   100  & 495 & 171     &    179                &    20.0  &  8.5		     \\
 		mannino\_k487b   &   487  & 5391 & 1     &    15                &    167.3   &  2.9		     \\
 		bio-diseasome   &   516  & 1188 & 1     &    1                &     9.5   &  0.6	     \\
 		ca-netscience   &   379 & 914 & 1     &    1                &     0.1  &  0.0		     \\
 		g000981		    &   110  & 188 & 1     &    1                &     0.0  &  0.0		     \\
 		imgseg\_138032	&   12736 & 23664 & 1     &    1                &    30.5  &  4.4		     \\
 		~\\
 		\midrule
 		~\\
 		Name  & $n$  &  $nnz$ &   MS   & new  &~~~    MS\,[s]   & new\,[s]   \\
 		 			\midrule
       bqp250-3  &   250  & 3092 & 25     &    15                &   414.1   & 85.8		     \\
       gka2c  &   50  & 813 & 1     &    1                &    0.5   &  0.3		     \\
       gka4d  &   100  & 2010 & 129     &    71                &    219.6  &  61.9		     \\
       gka5c  &   80  & 721 & 1     &    1                &    0.1   &  0.1		     \\
       gka7a  &   30  & 241 & 1     &    1                &   0.0   &  0.0		     \\
       be120.3.5  &   120  & 2248 & 111     &    63                &    257.7   &  62.4		     \\
       be250.3  &   250  & 3277 & 107     &    81                &     841.0   &  212.0		     \\
 			\bottomrule
 		\end{tabular*}
 		\caption{Comparison of \mcsparse (\emph{MS}) and our solver (\emph{new}) on seven \mc and seven \qubo instances (considered to be representative~\cite{mcsparse}).}
 		 		\label{tab:mcsparse}  
 \end{table}
 
 As to the number of branch-and-bound nodes, the pictures is somewhat mixed---with~\mcsparse requiring fewer nodes on three, and more nodes on four instances. Notably,~\mcsparse also includes clique-cuts separation, which is not implemented in our solver, and a specialized branching strategy, while we use a simple generic one. These two features might explain the smaller number of nodes on some instances.
As to the run time, five instances can be solved in less than a second by both solvers (with the new solver being slightly faster).
 On the remaining nine instances, the new solver is always faster---for all but one instances by a factor of more than $3$. On one instance (mannino\_k487b), it is even by a factor of more than $50$ faster.

\subsection{Parallelization}

Although parallelization is not the main topic of this article, we still provide some corresponding results in the following. 
To give insights into the strengths and weaknesses of our racing-based parallelization, we provide instance-wise results. We use the test-sets \emph{Mannino} and \emph{DIMACS}, which both contain instances that cannot be solved within one hour by \gurobi and our new solver in single-thread mode. The sizes of the instances are given in Table~\ref{tab:instancesparallel}.

\begin{table}[ht]
		\centering
		\footnotesize
		\setlength{\tabcolsep}{2pt}
		\begin{tabular*}{1.0\textwidth}{@{\extracolsep{\fill}}lrrrrlrr@{}}
			\toprule
			Name  & $|$V$|$  &  $|$E$|$ &  ~~~  & ~~~ &  	Name  & $|$V$|$  &  $|$E$|$  \\
			\midrule
		torusg3-8   & 512 & 1536 & & &       mannino\_k487a.mc   & 487 & 1435 \\	
		toruspm3-8-50   & 512 & 1536 & & &       mannino\_k487b.mc   & 487 & 5391 \\	
		torusg3-15    & 3375 & 10125 & & &       mannino\_k487c.mc   & 487 & 8511 \\	
		toruspm3-15-50   & 3375 & 10125 & & &       mannino\_k48.mc   & 48 &  1128\\								
			\bottomrule
		\end{tabular*}
		\caption{Details on \emph{DIMACS} (left)  and \emph{Mannino} (right) instances.} 
		\label{tab:instancesparallel} 
\end{table}

Table~\ref{tab:resultsparallel} provides results of \gurobi and the new solver on the \emph{DIMACS}  and \emph{Mannino} instances. Both solvers are run once with one thread and once with eight threads. As before, a time-limit of one hour is used.
The table provides the percentual primal-dual gap, as well as the run time. The results reveal for both solvers a performance degradation on easy instances with increased number of threads. Most notably on \emph{mannino\_k487b}, where \gurobi takes almost 10 times longer with eight threads. On the other hand, the new solver shows a strong speedup on two hard instances that cannot be solved in one hour singke-threaded, namely \emph{toruspm3-8-50} and \emph{mannino\_k487c}. On the latter, one even observes a super-linear speedup. This speedup can be at least partly attributed to the exclusive use of primal heuristics on one thread during racing, which finds an optimal solution quickly in both cases. On the other hand, in single-thread mode the best primal solution is sub-optimal even at the time-limit.
 
\begin{table}[ht]
		\centering
		\footnotesize
		\setlength{\tabcolsep}{2pt}
		\begin{tabular*}{1.0\textwidth}{@{\extracolsep{\fill}}lrrrrrrrr@{}}
			\toprule
			&   \multicolumn{4}{c}{primal-dual gap\,[\%]} &   \multicolumn{4}{c}{run time\,[s]} \\
			\cmidrule(lr){2-5} \cmidrule(lr){6-9}
			Name  & Grb-T1  &  Grb-T8 &  new-T1   & new-T8 &   Grb-T1  &  Grb-T8 &  new-T1   & new-T8  \\
			\midrule
		torusg3-8              &   0.0  & 0.0 & 0.0 &  0.0       &  1494.2           &    1178.5                    &     8.5   &  9.3		     \\
		toruspm3-8-50          &   1.8  & 1.8 & 0.5 &  0.0       &  $>$3600          &    $>$3600                   &     $>$3600   &  1415.8		     \\
		torusg3-15             &   6.8  & 3.4 & 1.3 &  0.4       &  $>$3600          &    $>$3600                   &     $>$3600   &  $>$3600	       \\
		toruspm3-15-50         &   9.5  & 12.2 & 2.3 &  2.3       &  $>$3600          &    $>$3600                   &    $>$3600   &  $>$3600	       \\
		mannino\_k487a            &   0.0  & 0.0 & 0.0 &  0.0       &  3.5           &     10.7                &    1.1   &  1.3		     \\
		mannino\_k487b            &   0.0  & 0.0 & 0.0 &  0.0       &  9.2           &     80.5                &    2.9   &  2.8		     \\
		mannino\_k487c             &   0.1   & 0.0 & 0.1 &  0.0    &  $>$3600      & 3176.7               &  $>$3600      &   398.2	       \\
		mannino\_k48             &   0.0  & 0.0 & 0.0 &  0.0      &  0.1          & 0.4             &  2.7                &   3.8	       \\		
			\bottomrule
		\end{tabular*}
		\caption{Results of \gurobi 9.5 (\emph{Grb}) and the new solver (\emph{new}), with one (\emph{-T1}) and eight (\emph{-T8}) threads each.}  
		\label{tab:resultsparallel}
\end{table}

Finally, Table~\ref{tab:best} provides results for several previously unsolved \mc and \qubo benchmark instances from the \emph{QPLIB} and the 7th DIMACS Challenge. We also report the previous best known solution values (\emph{previous primal}) from the literature, which were taken from the \emph{QPLIB} and the \emph{MQLib}~\cite{Dunning18}.
		For the \emph{QPLIB} instances we report the results from the one hour single-thread run in Section~\ref{sec:comp_exact}. However, for the DIMACS instances, \emph{torusg3-15} and \emph{toruspm3-15-50},
		we performed additional runs. 	Note that the DIMACS instances were originally intended to be solved with negated weights. However, it seems that most publications, e.g.,~\cite{Dunning18}, do not perform this transformation. Thus, we also use the unmodified instances, to allow for better comparison. However, we additionally report the solution values of the transformed instances, these transformed instances are marked by a $\star$.
		We used a machine with 88 cores of Intel Xeon E7-8880 v4 CPUs @ 2.20GHz. We ran the two instances (non-exclusively) for at most 3 days while using 80 threads. Both \emph{torusg3-15} and \emph{torusg3-15$^\star$} could be solved to optimality in this way, but \emph{toruspm3-15-50} and \emph{toruspm3-15-50$^\star$} still remained with a primal-dual gap of $1.8$ percent each.

	 			\begin{table}[ht]
		 			\centering
		 						\footnotesize
		 			\begin{tabular*}{0.8\textwidth}{@{\extracolsep{\fill}}lrrr@{}}
		 			\hline
		 			Name &  gap [\%] & new primal & previous primal \\
		 			\hline
		 		torusg3-15 & \textbf{opt} &  \textbf{286626481} & 282534518 \\
		 		torusg3-15$^\star$ & \textbf{opt} &  \textbf{292031950} & - \\
		 		toruspm3-15-50    & 1.8 &     {3010} & 2968 \\
		 		toruspm3-15-50$^\star$    & 1.8 &     {3008} & - \\
		 	\midrule
		 	QPLIB\_3693   & 1.3  & -1152    &                     -1148 \\
        	QPLIB\_3850 &   1.7 &    -1194   &                  -1192			 	\\								 				
		 			\hline
		 			\end{tabular*}
		 			\caption{Improved solutions for \mc (first four) and \qubo (last two) benchmark instances.}\label{tab:best}
		 			\end{table}

\section{Conclusion and outlook}

This article has demonstrated how to design a state-of-the-art solver for sparse \qubo and \mc instances, by enhancing and combining key algorithmic ingredients such as presolving and cutting-plane generation. The newly implemented solver outperforms both the leading commercial and non-commercial competitors on a wide range of test-sets from the literature.
Moreover, the best known solutions to several instances could be improved.

Still, there are various promising routes for further improvement. Examples would be a new branching strategy, or the implementation of additional separation methods such as clique-cuts~\cite{Bonato2014}.
In this way, a considerable further speedup of the new solver might be achieved.

			\bibliographystyle{plain}
			\bibliography{main.bib}

\end{document}